\setlist[description]{align=parright,style=multiline}
\newlength{\cdescwidth}
    \def\item[##1]{%
      \settowidth{\dimen0}{##1}%
      \ifdim\dimen0>\cdescwidth
        \global\cdescwidth=\dimen0
      \fi}
\newtheorem{theorem}{Theorem}[section]
\newtheorem{corollary}[theorem]{Corollary}
\newtheorem{example}[theorem]{Example}
\newtheorem{lemma}[theorem]{Lemma}
\newtheorem{proposition}[theorem]{Proposition}
\newcommand{\DD}{\mathcal{D}}
\newcommand{\PP}{\mathcal{P}}
\newcommand{\darrow}{{\downarrow}}
\newcommand{\restricted}[2]{#1{\mid}_{#2}}
\def\dom{\mathrm{dom}}
\title{Noncommutative Frames Revisited}
\author{Karin Cvetko-Vah}
\address{\textup{(Cvetko-Vah)} Department of Mathematics, Faculty of Mathematics and Physics, University of Ljubljana, Jadranska 21, SI-1000 Ljubljana, Slovenia}
\author{Jens Hemelaer}
\thanks{Jens Hemelaer was supported in part by a PhD fellowship of the Research Foundation (Flanders) and in part by the University of Antwerp (BOF)}
\address{\textup{(Hemelaer)} Department of Mathematics, University of Antwerp, Middelheimlaan 1, B-2020 Antwerp, Belgium}
\author{Jonathan Leech}
\address{\textup{(Leech)} Department of Mathematics, Westmont College, Santa Barbara, CA 93108, USA}
   \def\MR#1{}
\begin{document}

\begin{abstract}
In this note, we correct an error in \cite[Theorem 4.4]{kcv-frames} by adding an additional assumption of join completeness. We demonstrate with examples why this assumption is necessary, and discuss how join completeness relates to other properties of a skew lattice. 
\end{abstract}

\maketitle

\section{Introduction}

In \cite{kcv-frames}, the first author introduced noncommutative frames, motivated by a noncommutative topology constructed by Le Bruyn \cite{llb-covers-general-version} on the points of the Connes--Consani Arithmetic Site \cite{connes-consani}, \cite{connes-consani-geometry-as}. The definition of noncommutative frame fits in the general theory of skew lattices, a theory that goes back to Pascual Jordan \cite{jordan} and is an active research topic starting with a series of papers of the third author \cite{jl-rings} \cite{jl-boolean} \cite{jl-normal}. For an overview of the primary results on skew lattices, we refer the reader to \cite{jl-journey} or the earlier systematic survey \cite{jl-survey}.

The main purpose of this note is to study aspects of completeness for certain types of skew lattices and then correct Theorem 4.4 of \cite{kcv-frames}. The corrected version states that if $S$ is a join complete, strongly distributive skew lattice with $0$, then $S$ is a noncommutative frame if and only if its commutative shadow $S/\mathcal{D}$ is a frame. It will appear as Theorem \ref{th:ncframes} here. Originally in \cite{kcv-frames} this was stated for $S$ not necessarily join complete, but we will show in Example \ref{ex:ex-but-not-jc} and \ref{ex:ls-but-not-ex} that this assumption is necessary.

\section{Preliminaries}

A \emph{skew lattice} is a set $A$ endowed with a pair of idempotent, associative operations  $\land$ and $\lor$ which satisfy the absorption laws:
\[
x\land (x\lor y)=x=x\lor (x\land y)\text{ and } (x\land y)\lor y=y=(x\lor y)\land y.
\]  
The terms meet and join are still used for $\land$ and $\lor$, but without assuming commutativity.  Given skew lattices $A$ and $B$, a \emph{homomorphism} of skew lattices is a map $f:A\to B$ that preserves finite meets and joins, i.e.\ it satisfies the following pair of axioms:
\begin{itemize}
\item $f(a\land b)=f(a)\land f(b)$, for all $a,b\in A$;
\item $f(a\lor b)=f(a)\lor f(b)$, for all $a,b\in A$.
\end{itemize}
A \emph{natural partial order} is defined on any skew lattice $A$ by: $a\leq b$ iff $a\land b=b\land a=a$, or equivalently, $a\lor b=b=b\lor a$. The \emph{Green's equivalence relation $\DD$} is defined on $A$ by: $a\DD b$ iff $a\land b\land a=a$ and $b\land a\land b=b$, or equivalently, $a\lor b\lor a=a$ and $b\lor a\lor b=b$. By  Leech's First Decomposition Theorem \cite{jl-rings}, relation $\DD$ is a congruence on a skew lattice $A$ and $A/\DD$ is a maximal lattice image of $A$, also referred to as the \emph{commutative shadow} of $A$. 

Skew lattices are always \emph{regular} in that they satisfy the identities:
\[
a\land x\land a\land y\land a=a\land x\land y\land a \text{ and }
a\lor x\lor a\lor y\lor a=a\lor x\lor y\lor a.
\]
The following result is an easy consequence of regularity.
\begin{lemma}\label{lemma:reg}
Let $a,b,u,v$ be elements of a skew lattice $A$ such that $\DD_u\leq \DD_a$, $\DD_u\leq \DD_b$, $\DD_a\leq \DD_v$ and $\DD_b\leq \DD_v$. Then: 
\begin{enumerate}
\item $a\land v\land b=a\land b$,
\item $a\lor u\lor b=a\lor b$.
\end{enumerate}
\end{lemma}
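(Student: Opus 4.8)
The plan is to prove part (1) directly and to deduce part (2) by passing to the dual skew lattice $A^{\mathrm{op}}$, in which the roles of $\land$ and $\lor$ are exchanged: $A^{\mathrm{op}}$ is again a regular skew lattice, the relation $\DD$ on it is unchanged, the natural partial order is reversed, and the hypotheses $\DD_u\leq\DD_a$, $\DD_u\leq\DD_b$ become exactly what is needed to apply part (1) inside $A^{\mathrm{op}}$. So everything reduces to showing $a\land v\land b=a\land b$, and in fact only $\DD_a\leq\DD_v$ and $\DD_b\leq\DD_v$ will be used.

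The first step is the observation that $\DD_c\leq\DD_w$ forces $c\land w\land c=c$. Since $\DD$ is a congruence and $A/\DD$ a lattice (Leech's First Decomposition Theorem), $\DD_{c\land w\land c}=\DD_c\land\DD_w\land\DD_c=\DD_c$, so $c\land w\land c\Drel c$; moreover $c\land(c\land w\land c)=c\land w\land c=(c\land w\land c)\land c$, so $c\land w\land c\leq c$ in the natural partial order; and any two $\DD$-related elements comparable in the natural partial order coincide (if $p\leq q$ and $p\Drel q$ then $q=q\land p\land q=p\land q=p$). Applying this with $(c,w)=(a,v)$ and with $(c,w)=(b,v)$ gives $a\land v\land a=a$ and $b\land v\land b=b$. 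Substituting these into the regularity identities $a\land v\land a\land b\land a=a\land v\land b\land a$ and $b\land a\land b\land v\land b=b\land a\land v\land b$ yields
\[
a\land v\land b\land a=a\land b\land a \qquad\text{and}\qquad b\land a\land v\land b=b\land a\land b .
\]

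Now put $x=a\land v\land b$ and $y=a\land b$. Since $\DD$ is a congruence and $\DD_v\geq\DD_a\geq\DD_a\land\DD_b$, we get $\DD_x=\DD_a\land\DD_v\land\DD_b=\DD_a\land\DD_b=\DD_y$, so $x\Drel y$ and in particular $x\land y\land x=x$ by the definition of $\DD$. On the other hand, regrouping the eightfold meet and using the two identities just displayed together with the idempotency of $a\land b$,
\[
x\land y\land x=(a\land v\land b\land a)\land(b\land a\land v\land b)=(a\land b\land a)\land(b\land a\land b)=(a\land b)\land(a\land b)\land(a\land b)=a\land b=y .
\]
Therefore $x=y$, i.e.\ $a\land v\land b=a\land b$; this proves (1), and (2) follows by the duality noted above.

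The step I expect to be the real obstacle is the last one: rather than trying to rewrite $a\land v\land b$ into $a\land b$ directly, one evaluates the sandwich $x\land y\land x$ and then reads off $x=y$ from $x\Drel y$. The only other point needing care is arranging the two regularity rewrites so that the intruding letter $v$ is flanked by a matching pair ($a$--$a$, respectively $b$--$b$), which is what makes $a\land v\land a=a$, respectively $b\land v\land b=b$, applicable. An alternative, closer to the structure theory, would instead use the standard facts that Green's relations $\RR$ and $\LL$ are congruences on a skew lattice with $\RR\cap\LL$ trivial: one checks $a\land v\Rrel a$ and $v\land b\Lrel b$ directly, so that $a\land v\land b$ is both $\RR$-related and $\LL$-related to $a\land b$ and hence equal to it, with (2) again dual.
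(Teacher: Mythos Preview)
The paper does not actually supply a proof of this lemma; it only remarks beforehand that the statement ``is an easy consequence of regularity'' and then moves on. Your argument is correct and does precisely what the paper asserts is possible: from $\DD_a\leq\DD_v$ and $\DD_b\leq\DD_v$ you first extract $a\land v\land a=a$ and $b\land v\land b=b$, then feed these into the regularity identity to obtain $a\land v\land b\land a=a\land b\land a$ and $b\land a\land v\land b=b\land a\land b$, and finally close the argument with the sandwich trick $x\land y\land x=y$ together with $x\Drel y$. The duality reduction of (2) to (1) is also fine.

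If anything, the alternative you sketch at the end is likely closer to the ``easy'' argument the paper has in mind: in the $\land$-reduct of a skew lattice (a regular band) the Green relations $\RR$ and $\LL$ are congruences with $\RR\cap\LL$ trivial, so $a\land v\Rrel a$ and $v\land b\Lrel b$ immediately give $a\land v\land b\Rrel a\land b$ and $a\land v\land b\Lrel a\land b$, hence equality. That route avoids the eight-letter computation, but your main proof has the advantage of being self-contained and using nothing beyond the regularity identity and the definition of $\DD$.
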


A skew lattice is \emph{strongly distributive} if it satisfies the identities:
\[
(x\lor y)\land z = (x\land z)\lor (y\land z) \text{ and } x\land (y\lor z)=(x\land y)\lor (x\land z).
\]
By a result of Leech \cite{jl-normal}, a skew lattice is strongly distributive if and only if it is symmetric, distributive and normal, where a skew lattice $A$ is called:
\begin{itemize}
 \item \emph{symmetric} if for any $x,y\in A$, $x\lor y=y\lor x$ iff $x\land y=y\land x$;
 \item \emph{distributive} if it satisfies  the identities:
\begin{eqnarray*}
x\land (y\lor z)\land x=(x\land y\land x)\lor (x\land z\land x)\\
x\lor (y\land z)\lor x=(x\lor y\lor x)\land (x\lor z\lor x);
\end{eqnarray*}
\item \emph{normal} if it satisfies the identity $x\land y\land z\land x=x\land z\land y\land x$.
\end{itemize}
Further, it is shown in \cite{jl-normal} that a skew lattice $A$ is normal if and only if given any $a\in A$ the set
\[
a\!\downarrow~ =\{u\in A\,|\, u\leq a\}
\]
is a lattice. For this reason, normal skew lattices are sometimes called \emph{local lattices}. Given any comparable $\DD$-classes $D<C $ in a normal skew lattice $A$ and any $c\in C$ there exist a unique $d\in D$ such that $d<c$ with respect to the natural partial order.

Finally, a \emph{skew lattice with $0$} is a skew lattice with a distinguished element $0$ satisfying $x\lor 0=x=0\lor x$, or equivalently, $x\land 0=0=0\land x$.

\begin{example}\label{ex:partial-func}
Let $A,B$ be non-empty sets and denote by $\PP(A,B)$ the set of all partial functions from $A$ to $B$. We define the following operations on $\PP(A,B)$:
\begin{eqnarray*}
f \land g & = &\restricted{f}{\dom(f) \cap \dom(g)}\\
f \lor g& = &g\cup \restricted{f}{\dom(f) \setminus \dom(g)}.
\end{eqnarray*}
Leech \cite{jl-normal} proved that $(\PP(A,B);\land, \lor)$ is a strongly distributive skew lattice with $0$. Moreover, given $f,g\in (\PP(A,B);\land, \lor)$ the following hold:
\begin{itemize}
\item $f\, \DD\, g$ iff $\dom(f)=\dom(g)$;
\item $f\leq g$ iff $f=\restricted{g}{\dom(f) \cap \dom(g)}$;
\item $\PP(A,B)/\DD\cong \PP(A)$, the Boolean algebra of subsets of A;
\item $\PP(A,B)$ is left-handed in that $x \land y \land x = x \land y$ and dually, $x \lor y \lor x = y \lor x$ hold.
\end{itemize}
\end{example}

A \emph{commuting subset} of a skew lattice $A$ is a nonempty subset $\{x_i\,|\, i\in I\}\subseteq A$ such that $x_i\land x_j=x_j\land x_i$ and  $x_i\lor x_j=x_j\lor x_i$ hold for all $i,j\in I$. The following result is a direct consequence of the definitions.

\begin{lemma}\label{lemma:hmf-comm}
Let $A$ and $B$ be skew lattices, $f:A\to B$ be a homomorphism of skew lattices, and $\{x_i\,|\, i\in I\}\subseteq A$ be a commuting subset of $A$. Then $\{f(x_i)\,|\, i\in I\} $ is a commuting subset of $B$.
\end{lemma}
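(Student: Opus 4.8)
The plan is to reduce the statement directly to the defining property of a skew lattice homomorphism, namely that $f$ preserves binary meets and binary joins. Fix arbitrary indices $i,j\in I$. Since $\{x_i\,|\,i\in I\}$ is a commuting subset of $A$, by definition we have both $x_i\land x_j=x_j\land x_i$ and $x_i\lor x_j=x_j\lor x_i$ in $A$.

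Next I would apply $f$ to each of these two equalities. As $f$ is a homomorphism, $f(x_i\land x_j)=f(x_i)\land f(x_j)$ and $f(x_j\land x_i)=f(x_j)\land f(x_i)$; combining these with $x_i\land x_j=x_j\land x_i$ gives $f(x_i)\land f(x_j)=f(x_j)\land f(x_i)$. The identical computation with $\lor$ in place of $\land$ yields $f(x_i)\lor f(x_j)=f(x_j)\lor f(x_i)$. Since $i$ and $j$ were arbitrary, the set $\{f(x_i)\,|\,i\in I\}$ satisfies both commuting conditions; it is nonempty because $\{x_i\,|\,i\in I\}$ is, and hence it is a commuting subset of $B$.

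There is essentially no obstacle here: the only point worth noting is that the assignment $i\mapsto f(x_i)$ need not be injective, but this is harmless, since in the degenerate case $f(x_i)=f(x_j)$ the two required identities hold trivially, and otherwise they follow from the displayed argument. I would therefore expect the proof to occupy a single short paragraph, with no appeal to any earlier result beyond the definitions of commuting subset and of homomorphism.
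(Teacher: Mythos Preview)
Your proposal is correct and matches the paper's treatment: the paper does not give a written proof at all, merely stating that the lemma ``is a direct consequence of the definitions,'' which is exactly the argument you spell out.
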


A  skew lattice is said to be \emph{join [meet] complete} if all  commuting subsets have suprema [infima] with respect to the natural partial ordering. By a result of Leech \cite{jl-boolean}, the choice axiom implies that any join complete  symmetric skew lattice has a top $\mathcal D$-class. If it occurs, we denote the top $\DD$-class of a skew lattice $A$ by $T$ (or $T_A$). Dually, if $A$ is a meet complete symmetric skew lattice, then it always has a bottom $\DD$-class, denoted by $B$ (or $B_A)$.

A \emph{frame} is a lattice that has all joins (finite and infinite), and satisfies the infinite distributive law:
\[x\land \bigvee_i y_i=\bigvee_i(x\land y_i).
\]
A \emph{noncommutative frame} is a strongly distributive, join complete skew lattice $A$ with $0$ that satisfies the infinite distributive laws:
\begin{equation}\label{eq:inf-dist-law}
 (\bigvee_i x_i)\land y=\bigvee_i (x_i\land y) \qquad \text{and} \qquad
x\land ({\bigvee_i y_i})=\bigvee_i (x\land y_i)
\end{equation}
for all $x,y\in A$ and all commuting subsets $\{x_i\,|\, i\in I\}, \{y_i\,|\, i\in I\}\subseteq A$.  

By a result of Bignall and Leech \cite{jl-discriminator}, any join complete, normal skew lattice $A$ with $0$ (for instance, any noncommutative frame) satisfies the following:
\begin{itemize}
\item $A$ is meet complete, with the meet of a commuting subset $C$ denoted by $\bigwedge C$;
\item any nonempty subset $C\subseteq A$ has an infimum with respect to the natural partial order, to be denoted by $\bigcap C$ (or  by $x\cap y$ in the case $C=\{x,y\}$);
\item if $C$ is a nonempty commuting subset of $A$, then $\bigwedge C=\bigcap C$.
\end{itemize}
We call the $\bigcap C$ the \emph{intersection} of $C$. 

A \emph{lattice section} $L$ of a skew lattice $S$ is a subalgebra that is a lattice (i.e.\ both $\land$ and $\lor$ are commutative on $L$) and that intersects each $\DD$-class in exactly one element.  When it exists, a lattice section is a maximal commuting subset and it is isomorphic to the maximal lattice image, as shown by Leech in \cite{jl-rings}. If a normal skew lattice $S$ has a top $\DD$-class $T$ then given $t\in T$,  $t\darrow =\{x\in S\;|\; x\leq t\}$ is a lattice section of $S$; moreover, all lattice sections are of the form $t\darrow$ for some $t\in T$. Further, it is shown in \cite{jl-rings} that any symmetric skew lattice $S$ such that $S/\DD$ is countable has a lattice section. 
 
 We say that a commuting subset $C$ in a symmetric skew lattice $S$ \emph{extends to a lattice section} if there exists a lattice section $L$ of $C$ such that $C\subseteq L$.

\section{Comparison of completeness properties}

Let $S$ be a normal, symmetric skew lattice. We will consider the following four properties that $S$ might have:
\begin{cdesc}
\item[(JC)] $S$ is join complete;
\item[(BA)] $S$ is \emph{bounded from above}, i.e.\ for every commuting subset $C$ there is an element $s \in S$ such that $c \leq s$ for all $c \in C$;
\item[(EX)] every commuting subset extends to a lattice section;
\item[(LS)] there exists a lattice section.
\end{cdesc}
Note that the last two properties are trivially satisfied if $S$ is commutative.

\begin{proposition}
For normal, symmetric skew lattices, the following implications hold:
\begin{equation*}
\text{\bf (JC)} \Rightarrow \text{\bf (BA)} \Rightarrow \text{\bf (EX)} \Rightarrow \text{\bf (LS)}.
\end{equation*}
\end{proposition}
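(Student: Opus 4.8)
The plan is to dispose of the two outer implications in a line apiece and to put all the work into \textbf{(BA)} $\Rightarrow$ \textbf{(EX)}, where the content lies. For \textbf{(JC)} $\Rightarrow$ \textbf{(BA)}: given a commuting subset $C$, its supremum in the natural partial order exists by join completeness and is in particular an upper bound of $C$. For \textbf{(EX)} $\Rightarrow$ \textbf{(LS)}: if $S\neq\emptyset$, pick $x\in S$, note that $\{x\}$ is a commuting subset, and apply \textbf{(EX)} to get a lattice section containing $x$ (and if $S=\emptyset$ there is nothing to prove).

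For \textbf{(BA)} $\Rightarrow$ \textbf{(EX)} I would proceed as follows. Given a commuting subset $C$, use Zorn's Lemma — the union of a chain of commuting subsets is again commuting — to extend $C$ to a commuting subset $M\supseteq C$ that is maximal for inclusion. Applying \textbf{(BA)} to $M$ yields $s\in S$ with $m\leq s$ for all $m\in M$, so $M\subseteq s\darrow$; and since $S$ is normal, $s\darrow$ is a lattice (by Leech's characterization recalled in the Preliminaries), hence itself a commuting subset of $S$. Maximality of $M$ then forces $M=s\darrow$. At this point, by the facts recalled in the Preliminaries — for $t$ in the top $\DD$-class, $t\darrow$ is a lattice section — it is enough to show that $\DD_s$ is the top $\DD$-class of $S$, since then $s\darrow=M$ is a lattice section containing $C$.

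To see that $\DD_s$ is the top $\DD$-class I would argue by contradiction. If it is not, there is $t\in S$ whose image $\bar t$ in the commutative shadow $S/\DD$ satisfies $\bar t\not\leq\bar s$. Set $e=s\lor t\lor s$. Idempotency and associativity give $s\lor e=e=e\lor s$, so $s\leq e$; hence every $m\in M=s\darrow$ satisfies $m\leq s\leq e$ and therefore commutes with $e$, which makes $M\cup\{e\}$ a commuting subset. On the other hand the quotient homomorphism $S\to S/\DD$ sends $e$ to $\bar s\lor\bar t>\bar s$, so $e\not\leq s$, i.e.\ $e\notin s\darrow=M$; this contradicts the maximality of $M$.

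I expect the main obstacle to be exactly the heart of the second step: recognizing that a maximal commuting subset bounded above by $s$ must coincide with $s\darrow$, and then squeezing out of this the fact that $\DD_s$ has to be the top $\DD$-class via the element $e=s\lor t\lor s$. Everything else reduces to routine manipulation of the absorption and regularity identities and to the structure theory for normal skew lattices quoted above.
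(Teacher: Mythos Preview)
Your proof is correct and follows essentially the same route as the paper's: Zorn's Lemma yields a maximal commuting subset $M\supseteq C$, the bound $s$ from \textbf{(BA)} together with normality forces $M=s\darrow$, and then the element $s\lor t\lor s$ witnesses that $\DD_s$ must be the top $\DD$-class. The paper's version is slightly terser (it first notes that $s$ is maximal for the natural partial order, then derives the top-$\DD$-class conclusion), but the substance is identical.
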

\begin{proof}
We only prove $\text{\bf (BA)} \Rightarrow \text{\bf (EX)}$, the other two implications are trivial. Take a normal, symmetric skew lattice $S$, such that every commutative subset has a join. Let $C \subseteq S$ be a commuting subset. We have to prove that $C$ extends to a lattice section. For every chain $C_0 \subseteq C_1 \subseteq \dots$ of commuting subsets, the union $\bigcup_{i = 0}^\infty C_i$ is again a commuting subset. So by Zorn's Lemma, $C$ is contained in a maximal commuting subset $C'$. Take an element $s \in S$ such that $s \geq c$ for all $c \in C'$. Then $s\!\downarrow$ contains $C'$ and it is a commuting subset because $S$ is normal. By maximality, $C' = s\!\downarrow$. Again by maximality, $s$ is a maximal element for the natural partial order on $S$. This also means that $s$ is in the top $\mathcal{D}$-class (if $y \in S$ has a $\mathcal{D}$-class with $[y] \not\leq [s]$, then $s \vee y \vee s > s$, a contradiction). So $C'$ is a lattice section.
\end{proof}

We claim that the converse implications do not hold in general. We will give a counterexample to all three of them. In each case, the counterexamples are strongly distributive skew lattices with $0$.

\begin{example}[\textbf{(BA)} $\not\Rightarrow$ \textbf{(JC)}] \label{ex:ex-but-not-jc}
Consider the set $S = \mathbb{N} \cup \{ \infty_a, \infty_b \}$ and turn $S$ into a skew lattice by setting
\begin{gather*}
x \wedge y = \min(x,y) \qquad x \vee y = \max(x,y)
\end{gather*}
whenever $x$ or $y$ is in $\mathbb{N}$ ($\infty_a$ and $\infty_b$ are both greater than every natural number), and
\begin{gather*}
\infty_a \wedge \infty_b = \infty_a = \infty_b \vee \infty_a \\
\infty_b \wedge \infty_a = \infty_b = \infty_a \vee \infty_b.
\end{gather*}
Then $S$ is a left-handed strongly distributive skew lattice with $0$. The commuting subsets of $S$ are precisely the subsets that do not contain both $\infty_a$ and $\infty_b$. Clearly, $S$ is bounded from above (as well as meet complete). However, the commuting subset $\mathbb{N} \subseteq S$ does not have a join.
\end{example}

Note that there are commutative examples as well, for example the real interval $[0,1]$ with join and meet given by respectively maximum and minimum. The element $1$ is an upper bound for every subset, but the lattice is not join complete. However, we preferred an example where the commutative shadow $S/\mathcal{D}$ is join complete. 

\begin{example}[\textbf{(EX)} $\not\Rightarrow$ \textbf{(BA)}] Here we give a commutative example. Take $S = \mathbb{N}$ with the meet and join given by respectively the minimum and maximum of two elements. Then {\textbf{\upshape (EX)}} is satisfied, but {\textbf{\upshape (BA)}} does not hold.
\end{example}

If $S$ satisfies $\textbf{(EX)}$ and $S/\mathcal{D}$ is bounded from above, then for any commuting subset $C \subseteq S$ we can find a lattice section $L \supseteq C$ and an element $y \in L$ such that $[y] \geq [c]$ for all $c \in C$. It follows that $y \geq c$ for all $c \in C$, so $S$ is bounded from above. So any example as the one above essentially reduces to a commutative example.

\begin{example}[\textbf{(LS) $\not\Rightarrow$ \textbf{(EX)}}] \label{ex:ls-but-not-ex}
Consider the subalgebra $S$ of $\mathcal P(\mathbb N, \mathbb N)$ consisting of all partial functions with finite image sets in $\mathbb N$. Note that $S/\mathcal{D} = \PP(\mathbb N)$. The skew lattice $S$ has lattice sections, for example the subalgebra of all functions in $\mathcal P(\mathbb N, \mathbb N)$ whose image set is $\{1\}$.  The set of 1-point functions $\{ n \mapsto n \mid n \in \mathbb N \}$ is clearly a commuting subset, but it cannot be extended to an entire lattice section.
\end{example}

Even the weakest property \textbf{(LS)}, the existence a lattice section, does not always hold for strongly distributive skew lattices.

\begin{example}[\textbf{(LS)} does not hold]
Let $S$ be the subalgebra of $\mathcal P(\mathbb R, \mathbb N)$ consisting of all partial functions $f$ such that $f^{-1}(n)$ is finite for all $n \in \mathbb N$. In particular, if $f \in S$, then the domain of $f$ is at most countable. Conversely, for any at most countable subset $U \subseteq \mathbb R$ we can construct an element $f \in S$ with domain $U$. Suppose now that $Q \subseteq S$ is a lattice section. Then there is an entire function $q : \mathbb{R} \to \mathbb{N}$ such that every $f \in Q$ can be written as a restriction $f = q|_U$ with $U \subseteq \mathbb{R}$ at most countable. Take $n \in \mathbb{N}$ such that $q^{-1}(n)$ is infinite, and take a countably infinite subset $V \subseteq q^{-1}(n)$. Then $q|_V \notin S$, by definition. But this shows that there is no element $f \in Q$ with domain $V$, which contradicts that $Q$ is a lattice section.
\end{example}

By \cite{jl-rings}, any symmetric skew lattice $S$ with $S/\mathcal{D}$ at most countable has a lattice section. This shows that in the above example it is necessary that the commutative shadow $S/\mathcal{D}$ is uncountable.

\section{\texorpdfstring{Join completeness in terms of $\mathcal{D}$-classes}{Join completeness in terms of D-classes}}
\label{sec:joins}

Let $S$ be a normal, symmetric skew lattice. Recall that for an element $a \in S$, we write its $\mathcal{D}$-class as $[a]$. For a $\mathcal{D}$-class $u \leq [a]$, the unique element $b$ with $b \leq a$ and $[b] = u$ will be called the \emph{restriction of $a$ to $u$}. We will denote the restriction of $a$ to $u$ by $a|_u$. For $u,v \leq [a]$ two $\mathcal{D}$-classes, we calculate that
\begin{equation*}
(a|_u)|_v = a|_v \qquad \text{if }v \leq u,
\end{equation*}
and in particular
\begin{equation*}
a|_u \leq a|_v \quad\Leftrightarrow\quad u \leq v.
\end{equation*}

\begin{proposition} \label{prop:joins}
Let $S$ be a normal, symmetric skew lattice and take a commuting subset $\{a_i : i \in I \} \subseteq S$. Then the following are equivalent:
\begin{enumerate}
\item the join $\bigvee_{i \in I} a_i$ exists;
\item the join $\bigvee_{i \in I} [a_i]$ exists and there is a unique $a \in S$ with $[a] = \bigvee_{i \in I} [a_i]$ and $a_i \leq a$ for all $i \in I$.
\end{enumerate}
In this case, $a = \bigvee_{i \in I} a_i$. In particular, $\left[\bigvee_{i \in I} a_i\right] = \bigvee_{i \in I} [a_i]$.
\end{proposition}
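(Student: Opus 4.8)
The plan is to prove the two implications separately. Throughout I will lean on three facts already recorded in the preliminaries: the quotient map $S \to S/\DD$ is an order-preserving homomorphism of skew lattices, so $a \leq b$ in $S$ forces $[a] \leq [b]$ in the lattice $S/\DD$; in a normal skew lattice the restriction $c|_u$ of $c$ to a $\DD$-class $u \leq [c]$ exists, is unique, and obeys the composition rule $(c|_u)|_v = c|_v$ whenever $v \leq u \leq [c]$; and the natural partial order is trivial on each $\DD$-class, i.e.\ $x \leq y$ together with $x \Drel y$ forces $x = y$ (immediate from the definitions).

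For $(2) \Rightarrow (1)$ I would start from $d = \bigvee_i [a_i]$ and the unique $a \in S$ with $[a] = d$ and $a_i \leq a$ for all $i$, and show this $a$ is the supremum of $\{a_i\}$ for the natural partial order. It is an upper bound by hypothesis, so I would take an arbitrary upper bound $c$ of $\{a_i\}$ and prove $a \leq c$. Since $[c] \geq [a_i]$ for all $i$ we get $[c] \geq d = [a]$, so $c|_{[a]}$ is defined; and since $a_i = c|_{[a_i]}$ and $[a_i] \leq [a] \leq [c]$, the composition rule yields $a_i = (c|_{[a]})|_{[a_i]} \leq c|_{[a]}$ for every $i$. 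Thus $c|_{[a]}$ is an upper bound of $\{a_i\}$ lying in the $\DD$-class $d$, so the uniqueness hypothesis in (2) forces $c|_{[a]} = a$, whence $a = c|_{[a]} \leq c$.

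For $(1) \Rightarrow (2)$ I would set $a = \bigvee_i a_i$ and first check $[a] = \bigvee_i [a_i]$. Applying the quotient map shows $[a]$ is an upper bound of $\{[a_i]\}$; to see it is the least one, take any upper bound $[b]$ in $S/\DD$, note the lattice meet $[a] \wedge [b]$ still dominates every $[a_i]$, form $a|_{[a]\wedge[b]}$ (legitimate since $[a]\wedge[b] \leq [a]$), and run the composition-rule computation exactly as above to get $a_i \leq a|_{[a]\wedge[b]}$ for all $i$. Since $a$ is the least upper bound, $a \leq a|_{[a]\wedge[b]} \leq a$, so $a = a|_{[a]\wedge[b]}$, which forces $[a] = [a]\wedge[b] \leq [b]$. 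For uniqueness in (2): any $a'$ with $[a'] = \bigvee_i [a_i] = [a]$ and $a_i \leq a'$ for all $i$ is an upper bound of $\{a_i\}$, hence $\geq a$, and being also $\DD$-related to $a$ it equals $a$. The closing assertions then follow, since in both directions the exhibited element is $\bigvee_i a_i$ with $\DD$-class $\bigvee_i [a_i]$.

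The one genuinely load-bearing step --- identical in the two directions --- is that restricting an upper bound of the commuting set down to a suitable $\DD$-class again produces an upper bound of the commuting set; this is exactly the bridge between the order-theoretic join in $S$ and the lattice-theoretic join in $S/\DD$. I expect the only real care needed to be in verifying the chains of $\DD$-class inequalities ($[a_i] \leq [a]\wedge[b] \leq [a] \leq [c]$ and the like) that make each invocation of the composition rule legitimate; beyond that the argument should be routine.
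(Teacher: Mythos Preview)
Your proof is correct and follows essentially the same approach as the paper. The only cosmetic difference is in $(1)\Rightarrow(2)$: the paper argues by contradiction (if $[a]$ were not the join, pick a strictly smaller upper bound $u<[a]$ and restrict $a$ to $u$ to contradict minimality of $a$), whereas you argue directly by restricting $a$ to $[a]\wedge[b]$ for an arbitrary upper bound $[b]$; these are the same restriction trick phrased two ways, and your treatment of the uniqueness clause and of $(2)\Rightarrow(1)$ matches the paper's almost verbatim.
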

\begin{proof}
\underline{$(1)\Rightarrow (2)$}. We claim that $\left[\bigvee_{i \in I} a_i \right]$ is the join of the $\mathcal{D}$-classes $[a_i]$. Because taking $\mathcal{D}$-classes preserves the natural partial order, $[a_i] \leq \left[\bigvee_{i \in I} a_i \right]$ for all $i \in I$. If $\left[\bigvee_{i \in I} a_i \right]$ is not the join of the $[a_i]$'s, then we can find a $\mathcal{D}$-class $u < \left[\bigvee_{i \in I} a_i \right]$ such that $[a_i] \leq u$ for all $i \in I$. But then
\begin{equation*}
a_i \leq \left.\left( \bigvee_{i \in I} a_i \right)\right|_u \!<~ \bigvee_{i \in I} a_i
\end{equation*}
for all $i \in I$, a contradiction. So $\bigvee_{i \in I} [a_i]$ exists and is equal to $\left[ \bigvee_{i \in I} a_i \right]$. For the remaining part of the statement, it is a straightforward calculation to show that $a = \bigvee_{i \in I} a_i$ is the unique element with the given properties.

\underline{$(2)\Rightarrow(1)$}. Write $u = \bigvee_{i \in I} [a_i]$. Let $b \in S$ be an element such that $a_i \leq b$ for all $i \in I$. Then $u \leq [b]$ and $a_i \leq b|_u$ for all $i \in I$. It follows that $a = b|_u$, in particular $a \leq b$. So $a$ is the join of the $a_i$'s.
\end{proof}

\begin{corollary}
Let $S$ be a normal, symmetric skew lattice. Suppose that $S$ is bounded from above and that $S/\mathcal{D}$ is join complete. If every two elements $a,b \in S$ have an infimum $a \cap b$ for the natural partial order, then $S$ is join complete.
\end{corollary}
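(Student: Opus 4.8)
The plan is to reduce the statement to Proposition~\ref{prop:joins}. Fix an arbitrary commuting subset $\{a_i : i \in I\} \subseteq S$; I must show that its supremum for the natural partial order exists. By Proposition~\ref{prop:joins} it suffices to verify part~(2) of that proposition, namely that $\bigvee_{i\in I}[a_i]$ exists in $S/\mathcal{D}$ and that there is a \emph{unique} element $a\in S$ with $[a]=\bigvee_{i\in I}[a_i]$ and $a_i\leq a$ for all $i\in I$. The existence of $\bigvee_{i\in I}[a_i]$ is immediate from the hypothesis that $S/\mathcal{D}$ is join complete; write $u=\bigvee_{i\in I}[a_i]$.

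For the \emph{existence} of a suitable $a$, I would use \textbf{(BA)}: since $\{a_i : i\in I\}$ is a commuting subset, there is $s\in S$ with $a_i\leq s$ for all $i$. Passing to $\mathcal{D}$-classes gives $[a_i]\leq[s]$ for all $i$, hence $u\leq[s]$, so the restriction $a:=s|_u$ is defined and $[a]=u$. Furthermore, for each $i$ we have $a_i\leq s$ with $[a_i]\leq u$, so $a_i=s|_{[a_i]}=(s|_u)|_{[a_i]}\leq s|_u=a$, using the identity $(s|_u)|_v=s|_v$ for $v\leq u$ recorded just before Proposition~\ref{prop:joins}. Thus $a$ has the required properties.

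The remaining hypothesis, the existence of binary infima, is exactly what gives \emph{uniqueness}. Suppose $a,a'\in S$ both satisfy $[a]=[a']=u$ and $a_i\leq a$, $a_i\leq a'$ for all $i$, and let $c=a\cap a'$ be their infimum for the natural partial order. Each $a_i$ is a lower bound of $\{a,a'\}$, so $a_i\leq c$ for all $i$; hence $u=\bigvee_{i\in I}[a_i]\leq[c]$, while $c\leq a$ yields $[c]\leq[a]=u$. Therefore $[c]=u=[a]$, and since $c\leq a$ with $c$ and $a$ in the same $\mathcal{D}$-class we conclude $c=a$; symmetrically $c=a'$, so $a=a'$. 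By Proposition~\ref{prop:joins} the join $\bigvee_{i\in I}a_i$ then exists, and as the commuting subset was arbitrary, $S$ is join complete.

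Apart from invoking Proposition~\ref{prop:joins}, the proof is largely bookkeeping with restrictions, and I do not anticipate a real obstacle. The one genuinely load-bearing step is the uniqueness argument: \textbf{(BA)} together with join completeness of $S/\mathcal{D}$ produces \emph{some} lift of $u$ sitting above all the $a_i$, and it is precisely the binary infimum $a\cap a'$ that forces any two such lifts to coincide. The points to handle with care are the restriction identities and the fact that two $\mathcal{D}$-equivalent comparable elements must be equal.
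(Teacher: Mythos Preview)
Your proof is correct and follows essentially the same approach as the paper: reduce to Proposition~\ref{prop:joins}, use \textbf{(BA)} to produce the candidate $a=s|_u$, and use the binary infimum $a\cap a'$ to force uniqueness by showing $[a\cap a']=u$. You simply spell out in more detail why $a_i\leq s|_u$ and why $[c]=u$, but the structure and the load-bearing ideas are identical.
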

\begin{proof}
Let $\{ a_i : i \in I\} \subseteq S$ be a commuting subset. Because $S$ is bounded from above, we can take an element $s \in S$ such that $a_i \leq s$ for all $i \in I$. Set $u = \bigvee_{i \in I} [a_i]$. By Proposition \ref{prop:joins} it is enough to show that there is a unique $a \in S$ with $[a] = u$ and $a_i \leq a$ for all $i \in I$. Existence follows by taking the restriction $s|_u$. To show uniqueness, take two elements $a$ and $a'$ with $[a]=[a']=u$ and $a_i \leq a$, $a_i \leq a'$ for all $i \in I$. It follows that $[a \cap a'] = \bigvee_{i \in I} [a_i] = u$. But this shows that $a = a \cap a' = a'$. 
\end{proof}

In Example \ref{ex:ex-but-not-jc}, the two elements $\infty_a$ and $\infty_b$ do not have an infimum.

\section{Noncommutative frames}
The following is a correction of a result in \cite{kcv-frames}, where the assumption of being join complete was erroneously omitted.

\begin{theorem}\label{th:ncframes}
Let $S$ be a join complete, strongly distributive skew lattice with $0$. Then $S$ is a noncommutative frame if and only if $S/\mathcal{D}$ is a frame.
\end{theorem}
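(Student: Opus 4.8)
The strategy is to prove both directions by leveraging Proposition~\ref{prop:joins}, which translates existence of joins in $S$ into existence of joins in $S/\mathcal{D}$ together with a uniqueness-of-lift condition, and to exploit that $S$ is assumed join complete from the start (so all the relevant suprema in $S$ actually exist). Throughout I will use freely that $S$, being strongly distributive, is symmetric, distributive and normal, and that by Bignall--Leech any join complete normal skew lattice with $0$ is meet complete and has all finite intersections $a \cap b$ for the natural partial order.

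\emph{Forward direction} ($S$ a noncommutative frame $\Rightarrow$ $S/\mathcal{D}$ a frame). Since $S$ is join complete, the commutative shadow $S/\mathcal{D}$ has all joins: given any family of $\mathcal{D}$-classes $\{u_i\}$, pick representatives, note any set of $\mathcal{D}$-classes is "commuting" in the lattice $S/\mathcal{D}$ trivially, and use Proposition~\ref{prop:joins} applied to a commuting subset of $S$ lying over them — or more simply, observe that $S \to S/\mathcal{D}$ is a surjective homomorphism and by Proposition~\ref{prop:joins} it preserves the joins that exist in $S$, which is all of them. For the infinite distributive law $x \wedge \bigvee_i y_i = \bigvee_i (x \wedge y_i)$ in $S/\mathcal{D}$: lift $x$ to some $a \in S$ and the $y_i$ to a commuting subset $\{b_i\}$ of $S$ (using that $S/\mathcal{D}$ is a lattice image, commuting lifts can be arranged, e.g.\ inside a lattice section over the relevant classes, which exists once one restricts appropriately; alternatively work directly with restrictions $b_i = s|_{[y_i]}$ of a common upper bound); apply the noncommutative infinite distributive law \eqref{eq:inf-dist-law} in $S$, then push down along the $\mathcal{D}$-quotient, using that the quotient map preserves $\wedge$, $\vee$ and existing joins (Proposition~\ref{prop:joins}).

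\emph{Reverse direction} ($S/\mathcal{D}$ a frame $\Rightarrow$ $S$ a noncommutative frame). Here $S$ is already assumed strongly distributive, join complete, with $0$, so only the two infinite distributive laws \eqref{eq:inf-dist-law} remain to be checked. Fix $x \in S$ and a commuting subset $\{y_i : i \in I\}$; set $v = \bigvee_i y_i$ (exists by join completeness) and $u = \bigvee_i [y_i] = [v]$ by Proposition~\ref{prop:joins}. I want $x \wedge v = \bigvee_i (x \wedge y_i)$. First check the $\mathcal{D}$-class level: $[x \wedge v] = [x] \wedge [v] = [x] \wedge \bigvee_i [y_i] = \bigvee_i ([x] \wedge [y_i]) = \bigvee_i [x \wedge y_i]$, using that $S/\mathcal{D}$ is a frame. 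Note $\{x \wedge y_i : i \in I\}$ is commuting (it is the image of the commuting set $\{y_i\}$ under the endomorphism-like maps $z \mapsto x \wedge z$ — one should verify this directly using normality/distributivity, since left-multiplication by $x$ need not be a homomorphism, but $x \wedge y_i$ and $x \wedge y_j$ do commute because the $y_i$ commute and $S$ is normal and distributive), so $w := \bigvee_i (x \wedge y_i)$ exists and $[w] = [x \wedge v]$. Since each $x \wedge y_i \leq x \wedge v$ (as $y_i \leq v$), both $w$ and $x \wedge v$ are lifts of the same $\mathcal{D}$-class lying above every $x \wedge y_i$; by the uniqueness clause of Proposition~\ref{prop:joins} (the join $\bigvee_i(x\wedge y_i)$ exists, hence its lift is unique), $w = x \wedge v$. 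The other law $(\bigvee_i x_i) \wedge y = \bigvee_i (x_i \wedge y)$ is symmetric and handled the same way.

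\emph{Main obstacle.} The delicate point in both directions is the bookkeeping with \textbf{commuting subsets}: to invoke \eqref{eq:inf-dist-law} in $S$ or to form the joins I need, I must know that the families obtained after applying $x \wedge (-)$, or after lifting classes from $S/\mathcal{D}$, are genuinely commuting subsets of $S$. For $\{x \wedge y_i\}$ this follows from normality plus distributivity (this is where strong distributivity is really used), and for lifting from the shadow one uses that over a fixed finite or bounded set of comparable classes a commuting system of representatives exists (lattice sections on down-sets, which exist since $a{\downarrow}$ is a lattice by normality). Getting these commutativity verifications cleanly stated — rather than the frame axioms themselves, which are then almost formal via Proposition~\ref{prop:joins} — is the crux. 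A secondary subtlety is making sure the quotient map $S \to S/\mathcal{D}$ preserves the \emph{infinite} joins we care about; this is exactly the content of the last sentence of Proposition~\ref{prop:joins}, so it costs nothing once set up properly.
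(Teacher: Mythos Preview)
Your reverse direction ($S/\mathcal{D}$ a frame $\Rightarrow$ $S$ a noncommutative frame) is essentially the paper's argument: establish $\bigvee_i (x\wedge y_i) \leq x\wedge \bigvee_i y_i$, then use Proposition~\ref{prop:joins} together with the frame law in $S/\mathcal{D}$ to see both sides lie in the same $\mathcal{D}$-class, forcing equality. The paper phrases the last step as ``same $\mathcal{D}$-class plus $\leq$ implies equal'' rather than invoking the uniqueness clause of Proposition~\ref{prop:joins}, but that is cosmetic. Your remark that $\{x\wedge y_i\}$ is commuting because it sits inside the lattice $(x\wedge\bigvee_i y_i)\!\downarrow$ is the right justification; the paper leaves this implicit.

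Your forward direction is correct but takes a longer route than the paper. You lift the family $\{[y_i]\}$ to a commuting family in $S$ (via restrictions $s|_{[y_i]}$ of a common upper bound), apply \eqref{eq:inf-dist-law}, and push down along $S\to S/\mathcal{D}$ using Proposition~\ref{prop:joins}. The paper instead observes that join completeness gives $S$ a top $\mathcal{D}$-class $T_S$, and for any $t\in T_S$ the down-set $t\!\downarrow$ is a lattice section isomorphic to $S/\mathcal{D}$; since $t\!\downarrow$ is closed under the joins and meets of $S$, the infinite distributive law restricts to it, so $S/\mathcal{D}\cong t\!\downarrow$ is a frame. This bypasses entirely the ``commuting lift'' bookkeeping you flag as the main obstacle: the lift is simply $t\!\downarrow$ itself. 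Your approach does work, but the step you correctly identify as delicate is exactly what the paper's one-line argument avoids.
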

\begin{proof}
Suppose that $S/\mathcal{D}$ is a frame. We prove the infinite distributivity laws (\ref{eq:inf-dist-law}). Take $x \in S$ and let $\{y_i : i \in I \} \subseteq S$ be a commuting subset. It is enough to show that
\begin{equation*}
x \wedge \bigvee_{i \in I} y_i ~=~ \bigvee_{i \in I} x \wedge y_i
\end{equation*}
(the proof for the other infinite distributivity law is analogous). Using that $S$ is strongly distributive, it is easy to compute that $y \leq z$ implies $x \wedge y \leq x \wedge z$. In particular, $x \wedge y_i \leq x \wedge \bigvee_{i} y_i$ for all $i \in I$. This shows: 
\begin{equation}\label{eq:inf-dist-inequality}
\bigvee_{i \in I} x \wedge y_i ~\leq~ x \wedge \bigvee_{i \in I} y_i.
\end{equation}
Further, we can use Proposition \ref{prop:joins} to compute
\begin{align*}
\left[ \bigvee_{i \in I} x \wedge y_i \right] ~=~ \bigvee_{i \in I} [x] \wedge [y_i] 
~=~ [x] \wedge \bigvee_{i \in I} [y_i] ~=~ \left[ x \wedge \bigvee_{i \in I} y_i \right],
\end{align*}
where for the middle equality we use that $S/\mathcal{D}$ is a frame. Since left- and right-hand side in (\ref{eq:inf-dist-inequality}) are in the same $\mathcal{D}$-class, the inequality must be an equality, so that $S$ is seen to be a noncommutative frame. Conversely, suppose that $S$ is a noncommutative frame. Then $S$ has a maximal $\mathcal{D}$-class, $T_S$. Let $t$ be in $T_S$. Then $t\!\downarrow$ is a copy of $S/\mathcal{D}$.
\end{proof}

The extra assumption that $S$ is join complete is necessary: the strongly distributive skew lattices from Examples \ref{ex:ex-but-not-jc} and \ref{ex:ls-but-not-ex} have a frame as commutative shadow, but they are not noncommutative frames, since they are not join complete.


\bibliographystyle{amsalphaarxiv}
\bibliography{thesis/thesis}

\end{document}